\newcommand{\blind}{0}
\newtheorem{proposition}{Proposition}
\begin{document}

\def\spacingset#1{\renewcommand{\baselinestretch}%
{#1}\small\normalsize} \spacingset{1}


\if0\blind
{
  \title{\bf Sliced rotated sphere packing designs}
  \author{Xu He\thanks{He's work is partial supported by Special National Key Research and Development Plan under Grant No. 2016YFD0400206, 
National Natural Science Foundation of China (NSFC 11501550 and NSFC 11671386) and funding from Chinese Ministry of Science and Technology (Grant No. 2016YFF0203801).
}\hspace{.2cm}\\
    Academy of Mathematics and System Sciences, \\Chinese Academy of Sciences}
  \maketitle
} \fi

\if1\blind
{
  \bigskip
  \bigskip
  \bigskip
  \begin{center}
    {\LARGE\bf Sliced rotated sphere packing designs}
\end{center}
  \medskip
} \fi

\bigskip
\begin{abstract}
Space-filling designs are popular choices for computer experiments. 
A sliced design is a design that can be partitioned into several subdesigns. 
We propose a new type of sliced space-filling design called sliced rotated sphere packing designs. 
Their full designs and subdesigns are rotated sphere packing designs. 
They are constructed by rescaling, rotating, translating and extracting the points from a sliced lattice. 
We provide two fast algorithms to generate such designs. 
Furthermore, we propose a strategy to use sliced rotated sphere packing designs adaptively. 
Under this strategy, initial runs are uniformly distributed in the design space, follow-up runs are added by incorporating information gained from initial runs, and the combined design is space-filling for any local region. 
Examples are given to illustrate its potential application. 
\end{abstract}

\noindent%
{\it Keywords:}  Design of experiment; Expected improvement; Maximin distance; Nested design; Sequential design.
\vfill
\hfill {\tiny technometrics tex template (do not remove)}

\newpage
\spacingset{1.45} 
\section{Introduction}
\label{sec:intro}

Space-filling designs whose points are uniformly scattered in the design space are popular choices for computer experiments~\citep{Santner:book,Sacks:1989}. 
In this work, we consider designs which contain points in $[0,1]^p$. 
The separation distance of a design $\mathbf{D}$ is 
\begin{equation} \label{eqn:dp}
 \min_{\mathbf{x}_1,\mathbf{x}_2\in \mathbf{D}, \mathbf{x}_1\neq \mathbf{x}_2} \left( \|\mathbf{x}_1-\mathbf{x}_2\|_2 \right),   
\end{equation}
and the fill distance of a design $\mathbf{D}$ is 
\begin{equation*}\label{eqn:df}
 \sup_{\mathbf{z}\in [0,1]^p}\left\{ \min_{\mathbf{x}\in \mathbf{D}}(\|\mathbf{z}-\mathbf{x}\|_2) \right\}. 
\end{equation*}
As discussed in~\citet{Johnson:1990} and~\citet{Haaland:2017}, designs with high separation distance or low fill distance have some optimal or asymptotically optimal characteristics that are desirable for computer experiments. 
Many space-filling designs are Latin hypercube designs, which achieve optimal one-dimensional projective uniformity~\citep{McKay:1979}. 
Maximin distance Latin hypercube designs~\citep{Morris:1995}, which are generated by numerically maximizing the separation distance within the class of Latin hypercube design, are popular among space-filling designs. 

Lattice-based designs are another type of space-filling designs~\citep{Heitmann:2016,RSPD,ILmMD}. 
A lattice is the collection of infinitely many orthogonal or nonorthogonal grid points, 
and a lattice-based design consists of the lattice points that are located in the design space. 
Lattice-based designs have identical structure at any local area and are therefore space-filling globally and locally. 
In particular, rotated sphere packing designs are constructed by rescaling, rotating and translating the lattice that has asymptotically lowest fill distance~\citep{RSPD}. 

If a space-filling design can be partitioned into several space-filling subdesigns, we call the full design, together with the slicing rule, a sliced space-filling design~\citep{Qian:2009:SSFD}.  
\citet{Qian:2012} proposed sliced Latin hypercube designs whose full designs and subdesigns are Latin hypercube designs. 
Later, \citet{Ba:2015} proposed optimal sliced Latin hypercube designs whose full designs and subdesigns are Latin hypercube designs with high separation distances. 
For illustration, an optimal sliced Latin hypercube design in two dimensions is presented in Figure~\ref{fig:intro}(a). 
Other types of sliced space-filling designs are proposed by~\citet{Qian:2009:SSFD,Yang:2013,Ai:2014,Huang:2014,Sun:2014,Xie:2014,Deng:2015,Liu:2015,SOALH}, among others. 
These designs are useful for 
computer experiments with quantitative and qualitative variables~\citep{Qian:Wu:Wu:2008,Deng:2016}, 
computer experiments with multiple levels of accuracy~\citep{Qian:Wu:2008}, 
and model validation~\citep{Zhang:2013}. 

\begin{figure}
\begin{center}
\includegraphics[width=13.6cm]{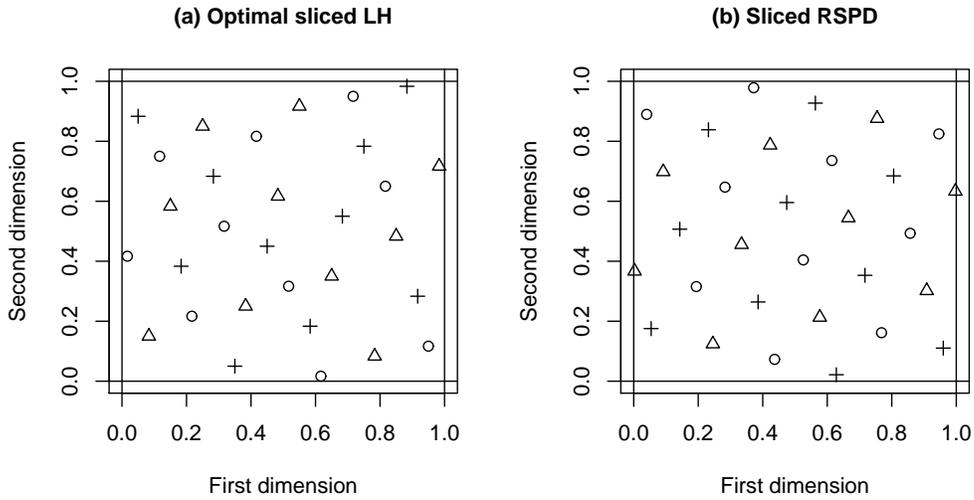}
\end{center}
\caption{Two sliced space-filling designs in two dimensions. Different slices are depicted by different symbols.  
\label{fig:intro}}
\end{figure}
%

In this paper, we propose a new class of sliced space-filling design called sliced rotated sphere packing designs. 
Their full designs and subdesigns are rotated sphere packing designs. 
Sliced rotated sphere packing designs are constructed based on sliced lattices, which are lattices that can be partitioned into several sublattices. 
An example of sliced rotated sphere packing design in two dimensions is displayed in Fig.~\ref{fig:intro}(b). 
We provide two algorithms to construct sliced rotated sphere packing designs. 
The first algorithm partitions an ordinary rotated sphere packing design and the second algorithm enlarges an ordinary rotated sphere packing design. 
Both algorithms are simple without any numerical steps. 
The full design and subdesigns of a sliced rotated sphere packing design achieve the same degree of uniformity as ordinary rotated sphere packing designs that are based on the same type of lattice. 
While any type of lattice can be sliced, we provide a space-filling type of sliced lattice, based on which sliced rotated sphere packing designs achieve better separation distances by (\ref{eqn:dp}) than optimal sliced Latin hypercube designs for low-dimensional cases. 

Sliced rotated sphere packing designs are also useful as sequential or adaptive designs. 
Usually, computer experiments are carried out sequentially. 
Many sequential space-filling designs distribute points uniformly in the design space for both initial and follow-up runs 
~\citep{Qian:2009,Qian:Ai:2010,NOALH,Xu:2015,Kong:2016}.  
However, as we gain more knowledge on the input-output system, we may find some regions more interesting for further investigation than others. 
This requires adaptive designs that can incorporate information gained from completed computer runs. 
One such example is sequential minimum energy designs~\citep{Joseph:energy} whose points are representative of a probability density. 
By assigning higher density for more interesting areas, we can obtain nonuniform designs that focus on critical areas. 
Such designs are adaptive if the density function is set based on information gained from completed computer runs. 

Similar to the idea of sequential minimum energy designs, 
we propose a strategy to use sliced rotated sphere packing designs adaptively. 
Under this strategy, design points are generated with two densities,  
a higher density for more interesting regions and a lower density for the remaining space, with the high density region chosen adaptively based on information gained from initial runs that are uniformly distributed in the design space. 
Unlike most adaptive design methods that search for optimal next-point-to-add from the whole design space, 
we propose to search over a short list of candidate follow-up runs. 
Because of separation distance properties among all initial and candidate follow-up runs, 
the generated points are space-filling globally for low density regions and locally for high density regions. 
Furthermore, adjacent points of sliced rotated sphere packing designs are connected with special rules which may simplify the definition of high density region. 
As a result, this strategy is useful for emulation of nonstationary computer experiments and optimization of computer experiments.

The rest of the paper is organized as follows: 
Section~\ref{sec:lattice} gives preliminary mathematical results on lattices and sliced lattices. 
In Section~\ref{sec:const}, we give the algorithms to construct sliced rotated sphere packing designs. 
In Section~\ref{sec:simu}, we compare sliced rotated sphere packing designs with other classes of sliced designs numerically. 
In Section~\ref{sec:adaptive}, we give the strategy to use sliced rotated sphere packing designs adaptively and show its applications.  
Conclusions and discussion are provided in Section~\ref{sec:conclusion}. 
Proofs are given in the appendix. 

\section{Lattices and sliced lattices} \label{sec:lattice}

In this section, we give necessary definitions and results of lattices and sliced lattices. 

A set of points in $\mathbb{R}^p$ is called a lattice if it forms a group. 
The lattice points consist of linear combinations of $p$ basis vectors with integer coefficients. 
We call a $p\times p$ matrix a generator matrix of the lattice if its rows are the basis vectors.
As an example, the set of integer vectors, $\mathbb{Z}^p$, is called the $p$-dimensional integer lattice, which can be generated from the $p$-dimensional identity matrix. 
Two important properties of lattices are their densities and thicknesses. 
If we place identical balls in $\mathbb{R}^p$ centered at the lattice points, then the maximum radius of the balls such that no two balls overlap is called the packing radius of the lattice, 
and the minimum radius of the balls such that the union of overlapped balls cover $\mathbb{R}^p$ is called the covering radius of the lattice.  
The Voronoi cell of a point $\mathbf{x}_0$ in a lattice $\mathbf{L}$ is the region 
\[ \text{Vor}(\mathbf{x}_0) = \{ \mathbf{z}: |\mathbf{z}-\mathbf{x}_0| \leq |\mathbf{z}-\mathbf{x}|, \mbox{ for any } \mathbf{x} \in \mathbf{L} \}. \]
The density and thickness of a lattice is the volume of one ball with packing and covering radius, respectively, divided by the volume of one Voronoi cell. 
Lattice-based designs with highest possible density and lowest possible thickness have asymptotically optimal separation distance and fill distance, respectively. 

In this paper, we focus on two types of lattices, $A_p$ and $A_p^*$. 
The $A_p$ is called the $p$-dimensional zero-sum root lattice, with one possible generator matrix 
\begin{eqnarray}\label{eqn:A_p}
\mathbf{M}_p &=& \frac{\sqrt{2}}{2} \mathbf{I}_p - \frac{\sqrt{p+1}+1}{\sqrt{2}p} \mathbf{J}_p,  
\end{eqnarray}
where $\mathbf{I}_p$ is the $p\times p$ identity matrix and $\mathbf{J}_p$ is the $p\times p$ matrix with all entries being one. 
The $A_p^*$ is called the dual of the $p$-dimensional zero-sum root lattice, with one generator matrix
\begin{eqnarray}\label{eqn:A_p^*}
\mathbf{M}_p^* &=&\frac{\sqrt{p+1}}{\sqrt{p}} \mathbf{I}_p - \frac{1}{\sqrt{p}(\sqrt{p+1}-1)} \mathbf{J}_p. 
\end{eqnarray}
The $A_p$ and $A_p^*$ are equivalent when $p=2$. 
Their densities and thicknesses for $2\leq p\leq 10$ are given in Tables~\ref{tab:density} and~\ref{tab:thickness}, respectively. 
The $A_p^*$ has the best known thickness for $2\leq p\leq 22$ and the $A_p$ has the best known density for $p=2$ and $3$. 
\citet{RSPD} recommended to use $A_p^*$ for constructing rotated sphere packing designs, but as can be seen from the table, both lattices are substantially more space-filling than $\mathbb{Z}^p$. 
For a comprehensive review of lattices, see \citet{Conway:1998} or~\citet{Zong:1999}. 

\begin{table}
\caption{Density of three lattices, $A_p$, $A_p^*$ and $\mathbb{Z}^p$}
\begin{center}
\begin{tabular}{cccccccccc}
$p$ & 2 & 3 & 4 & 5 & 6 & 7 & 8 & 9 & 10 \\
\hline
$A_p$   & 0.907 & 0.740 & 0.552 & 0.380 & 0.244 & 0.148 & 0.085 & 0.046 & 0.024 \\
$A_p^*$ & 0.907 & 0.680 & 0.441 & 0.255 & 0.135 & 0.065 & 0.030 & 0.013 & 0.005 \\
$\mathbb{Z}^p$   & 0.785 & 0.524 & 0.308 & 0.164 & 0.081 & 0.037 & 0.016 & 0.006 & 0.002 \\
\end{tabular}\label{tab:density}
\end{center}
\end{table}

\begin{table}
\caption{Thickness of three lattices, $A_p$, $A_p^*$ and $\mathbb{Z}^p$}
\begin{center}
\begin{tabular}{cccccccccc}
$p$ & 2 & 3 & 4 & 5 & 6 & 7 & 8 & 9 & 10 \\
\hline
$A_p$   & 1.21 & 2.09 & 3.18 & 5.92 & 9.84 & 18.9 & 33.0 & 64.4 & 116.0 \\
$A_p^*$ & 1.21 & 1.46 & 1.77 & 2.12 & 2.55 & 3.06 & 3.67 & 4.39 & 5.25 \\
$\mathbb{Z}^p$   & 1.57 & 2.72 & 4.93 & 9.20 & 17.4 & 33.5 & 64.9 & 126.8 & 249.0 \\
\end{tabular}\label{tab:thickness}
\end{center}
\end{table}

Next, we give our definition and some theoretical results for sliced lattices. 
Suppose $\mathbf{K}$ is a proper subgroup of a lattice $\mathbf{L}$. 
For any $\mathbf{q} \in \mathbf{L}$, the set $\mathbf{K} \oplus \mathbf{q} = \{ \mathbf{k}+\mathbf{q} : \mathbf{k}\in \mathbf{K}\}$ is called a coset of $\mathbf{K}$. 
If $\mathbf{Q}$ is a finite subset of $\mathbf{L}$ and any coset of $\mathbf{K}$ can be uniquely expressed by $\mathbf{K} \oplus \mathbf{q}$ with a $\mathbf{q} \in \mathbf{Q}$, 
then the cosets of $\mathbf{K}$ partition $\mathbf{L}$ and we call $(\mathbf{L},\mathbf{K},\mathbf{Q})$ a sliced lattice. 

It is not hard to see that any lattice $\mathbf{L}$ can be partitioned into sublattices. 
Suppose $\mathbf{G}$ is a generator matrix of $\mathbf{L}$ and $z$ is an integer greater than one.
Let $\mathbf{K}$ denote the lattice generated from $z \mathbf{G}$, then $(\mathbf{L},\mathbf{K},\{0,\ldots,z-1\}^p)$ is a sliced lattice with $z^p$ slices. 
Despite its simplicity, $(\mathbf{L},\mathbf{K},\{0,\ldots,z-1\}^p)$ is not useful for medium to large $p$ due to its large number of slices. 
A more practical type of sliced lattice is $(A_p^*,A_p,\mathbf{B})$, 
where $\mathbf{B}=\{\mathbf{u}_0,\ldots,\mathbf{u}_p\}$ and $\mathbf{u}_j$ is the $p$-vector with the first $j$ elements being one and other elements being zero. 
Here $\mathbf{u}_0$ is the zero vector. 
Proposition~\ref{prp:slice} below shows that $(A_p^*,A_p,\mathbf{B})$ is a sliced lattice with $p+1$ slices. 

\begin{proposition} \label{prp:slice}
Let $\mathbf{L}$ and $\mathbf{K}$ be the lattices generated by $\mathbf{M}_p^*$ in (\ref{eqn:A_p^*}) and $\{2(p+1)\}^{1/2}\mathbf{M}_p$ in (\ref{eqn:A_p}), respectively. 
Suppose $\mathbf{a}^T \mathbf{M}_p^* \in \mathbf{L}$ where $\mathbf{a}=(a_1,\ldots,a_p)^T$ is an integer vector with $\sum a_i \mod (p+1) =z$,  
then $\mathbf{a}^T \mathbf{M}_p^* \in \mathbf{K} \oplus \mathbf{u}_z$.
Furthermore, $\{ \mathbf{K} \oplus \mathbf{u}_0, \ldots, \mathbf{K} \oplus \mathbf{u}_p \}$ is a partition of $\mathbf{L}$. 
\end{proposition}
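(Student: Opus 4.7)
The central plan is to identify $\mathbf{L}/\mathbf{K}$ with the cyclic group $\mathbb{Z}/(p+1)\mathbb{Z}$ via the ``coordinate-sum'' map
\[
\phi:\mathbf{L}\to\mathbb{Z}/(p+1)\mathbb{Z},\qquad \phi(\mathbf{a}^T\mathbf{M}_p^*)=\textstyle\sum_{i=1}^{p}a_i\pmod{p+1}.
\]
Since the rows of $\mathbf{M}_p^*$ are linearly independent over $\mathbb{R}$, the integer coefficient vector $\mathbf{a}$ is uniquely determined by the lattice point, so $\phi$ is a well-defined additive homomorphism. I intend to verify three facts: (i)~$\mathbf{K}\subseteq\ker\phi$; (ii)~$[\mathbf{L}:\mathbf{K}]=p+1$; (iii)~each $\mathbf{u}_z$ lies in $\mathbf{L}$ with $\phi(\mathbf{u}_z)=z$. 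Once these are in hand, (iii) shows $\phi$ is surjective, (i) and (ii) together force $\mathbf{K}=\ker\phi$, and the $p+1$ distinct residues $\phi(\mathbf{u}_z)$ identify $\{\mathbf{u}_0,\ldots,\mathbf{u}_p\}$ as a complete transversal of $\mathbf{K}$ in $\mathbf{L}$ --- which is precisely the partition claim.

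The main algebraic step is a change of basis from the generators of $\mathbf{K}$ to those of $\mathbf{L}$. Both $\mathbf{M}_p$ and $\mathbf{M}_p^*$ have the form $\alpha\mathbf{I}_p+\beta\mathbf{J}_p$, and because $\mathbf{J}_p^2=p\mathbf{J}_p$, a direct expansion shows that the $\mathbf{J}$-terms in $\mathbf{M}_p\mathbf{M}_p^*$ cancel, yielding the clean identities
\[
\mathbf{M}_p\mathbf{M}_p^* \;=\; \bigl\{(p+1)/(2p)\bigr\}^{1/2}\mathbf{I}_p \quad\text{and}\quad \mathbf{M}_p^{\,2} \;=\; \tfrac{1}{2}(\mathbf{I}_p+\mathbf{J}_p).
\]
Combining these I obtain a relation $\{2(p+1)\}^{1/2}\mathbf{M}_p=\mathbf{A}\,\mathbf{M}_p^*$ with $\mathbf{A}$ proportional to $\mathbf{I}_p+\mathbf{J}_p$. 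Each row of $\mathbf{A}$ then has coordinate sum a multiple of $p+1$, placing it in $\ker\phi$ and yielding~(i). Since $\det(\mathbf{I}_p+\mathbf{J}_p)=p+1$ (eigenvalues $p+1$ once and $1$ with multiplicity $p-1$), the covolume ratio computation gives $[\mathbf{L}:\mathbf{K}]=p+1$, which is~(ii).

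For~(iii), I would invert the proportionality $\mathbf{M}_p\mathbf{M}_p^*\propto\mathbf{I}_p$ to obtain an explicit expression for $(\mathbf{M}_p^*)^{-1}$, then compute $\mathbf{c}_z^T:=\mathbf{u}_z^T(\mathbf{M}_p^*)^{-1}$. The identity $\mathbf{u}_z^T\mathbf{J}_p=z\mathbf{1}^T$ simplifies this substantially, and the remaining irrational pieces cancel through the same mechanism that killed the $\mathbf{J}$-terms above (essentially $(\sqrt{p+1}+1)(\sqrt{p+1}-1)=p$), leaving an integer coefficient vector $\mathbf{c}_z$ that satisfies $\sum_{i}(\mathbf{c}_z)_i\equiv z\pmod{p+1}$. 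The hardest part of the proof is precisely this last verification: it depends delicately on the specific scalings of $\mathbf{M}_p$ and $\mathbf{M}_p^*$ chosen in~(\ref{eqn:A_p})--(\ref{eqn:A_p^*}), but once set up it is pure algebra rather than any numerical search.
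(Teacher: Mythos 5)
Your skeleton --- the coordinate-sum homomorphism $\phi$, the inclusion $\mathbf{K}\subseteq\ker\phi$ from the change of basis, and the index count via $\det(\mathbf{I}_p+\mathbf{J}_p)=p+1$ --- is in substance the paper's argument repackaged: the paper writes both identities $\{2(p+1)\}^{1/2}\mathbf{M}_p=(\mathbf{I}_p+\mathbf{J}_p)\mathbf{M}_p^*$ and $\mathbf{M}_p^*=\{\mathbf{I}_p-\mathbf{J}_p/(p+1)\}\{2(p+1)\}^{1/2}\mathbf{M}_p$ and reads off directly that $(\mathbf{a}-\mathbf{u}_z)^T\mathbf{M}_p^*\in\mathbf{K}$ if and only if $\sum a_i\equiv z\pmod{p+1}$, i.e., it obtains $\mathbf{K}=\ker\phi$ from the two-sided change of basis rather than from a one-sided inclusion plus a covolume computation; both routes are legitimate. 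The genuine gap is your step (iii). You read $\mathbf{u}_z$ as a point of $\mathbb{R}^p$ and plan to show $\mathbf{u}_z\in\mathbf{L}$ by verifying that $\mathbf{u}_z^T(\mathbf{M}_p^*)^{-1}$ is an integer vector. It is not: your own identity gives $(\mathbf{M}_p^*)^{-1}=\{2p/(p+1)\}^{1/2}\mathbf{M}_p$, so $\mathbf{u}_z^T(\mathbf{M}_p^*)^{-1}=\{p/(p+1)\}^{1/2}\mathbf{u}_z^T+(\text{multiple of }\mathbf{1}^T)$, and the irrational factor $\{p/(p+1)\}^{1/2}$ does not cancel; for $p=2$ the vector $(1,0)$ simply does not lie in the lattice generated by $\mathbf{M}_2^*$. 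The intended reading --- which the paper's proof makes explicit by manipulating $(\mathbf{a}-\mathbf{u}_z)^T\mathbf{M}_p^*$ --- is that $\mathbf{u}_z$ is the integer \emph{coefficient} vector of the coset representative, i.e., the representative is the lattice point $\mathbf{u}_z^T\mathbf{M}_p^*$. Under that reading your step (iii) is immediate, since $\phi(\mathbf{u}_z^T\mathbf{M}_p^*)=\sum_i(\mathbf{u}_z)_i=z$ by the definition of $\phi$, and the delicate cancellation you anticipate as ``the hardest part'' is never needed.

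A second point you must not gloss over: your argument for (i) needs $\mathbf{A}$ to be an \emph{integer} matrix, and ``proportional to $\mathbf{I}_p+\mathbf{J}_p$'' is not enough. Chasing your two (correct) identities actually yields $\mathbf{A}=\{2(p+1)\}^{1/2}\{2p/(p+1)\}^{1/2}\mathbf{M}_p^2=\sqrt{p}\,(\mathbf{I}_p+\mathbf{J}_p)$, whose rows are not integer vectors, so as written neither (i) nor the index computation closes. This reflects a normalization mismatch between the factor $\{2(p+1)\}^{1/2}$ and the matrices in (\ref{eqn:A_p}) and (\ref{eqn:A_p^*}) as printed (the paper's proof asserts the constant is $1$); to make your steps (i) and (ii) airtight you should either flag the discrepancy or work with the scaling $\{2(p+1)/p\}^{1/2}\mathbf{M}_p$, for which $\mathbf{A}=\mathbf{I}_p+\mathbf{J}_p$ exactly and the rest of your argument goes through.
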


For a sliced lattice $(\mathbf{L},\mathbf{K},\mathbf{Q})$, think of $\mathbf{L}$ as being enlarged from $\mathbf{K}$. 
We call points in $\mathbf{K}$ ``adult'' points and the remaining points of $\mathbf{L}$ ``baby'' points. 
For a given baby point, the adult points nearest to it are called the ``parents'' of the baby point and the baby point is called a ``child'' of its parents. 
As shall be shown in Section~\ref{sec:adaptive}, the parent-child relation is useful for adaptive designs. 
Proposition~\ref{prp:parent} below gives the parent-child relation of the $(A_p^*,A_p,\mathbf{B})$ sliced lattice. 

\begin{proposition} \label{prp:parent}
(i) Suppose $(\mathbf{L},\mathbf{K},\mathbf{B})$ is a sliced lattice where $\mathbf{L}$ and $\mathbf{K}$ are generated by $\mathbf{M}_p^*$ in (\ref{eqn:A_p^*}) and $\{2(p+1)\}^{1/2}\mathbf{M}_p$ in (\ref{eqn:A_p}), respectively. 
For any baby point $(b_1,\ldots,b_p) \mathbf{M}_p^* \in \mathbf{L}\setminus \mathbf{K}$ with $\sum b_i \mod (p+1) =z$, 
its parents are $\{(b_1,\ldots,b_p) - (c_1,\ldots,c_p)\} \mathbf{M}_p^* $, 
of which either $( c_i\in\{0,1\}, i=1,\ldots p, \sum c_i = z )$ or $( c_i \in \{0,-1\}, i=1,\ldots p, \sum c_i = z-(p+1) )$. 

(ii) Conversely, any adult point $(a_1,\ldots,a_p) \mathbf{M}_p^* \in \mathbf{K}$ has $2^{p+1}-2$ children, which can be written as $\{(a_1,\ldots,a_p) - (c_1,\ldots,c_p)\} \mathbf{M}_p^*$ with either $( c_i \in \{0,1\}, i=1,\ldots p, (c_1,\ldots,c_p)\neq 0 )$ or $( c_i \in \{0,-1\}, i=1,\ldots p, (c_1,\ldots,c_p)\neq 0 )$. 
\end{proposition}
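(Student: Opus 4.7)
The plan is to cast the parent-search as an integer quadratic minimization, solve it, and then derive part (ii) by inverting the correspondence.

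For part (i), I would first apply Proposition~\ref{prp:slice}: since $(\mathbf{b}-\mathbf{c})\mathbf{M}_p^* \in \mathbf{L}$ lies in $\mathbf{K}$ if and only if $\sum_i(b_i - c_i) \equiv 0 \pmod{p+1}$, the congruence $\sum_i b_i \equiv z \pmod{p+1}$ forces $\sum_i c_i \equiv z \pmod{p+1}$. Thus the parents of $\mathbf{b}\mathbf{M}_p^*$ correspond to integer vectors $\mathbf{c}$ that minimize $\|\mathbf{c}\mathbf{M}_p^*\|_2^2$ subject to this congruence. A direct multiplication yields the Gram matrix $\mathbf{M}_p^*(\mathbf{M}_p^*)^T = \tfrac{p+1}{p}\mathbf{I}_p - \tfrac{1}{p}\mathbf{J}_p$, so
\[
\|\mathbf{c}\mathbf{M}_p^*\|_2^2 \;=\; \frac{p+1}{p}\sum_{i=1}^p c_i^2 \;-\; \frac{1}{p}\Bigl(\sum_{i=1}^p c_i\Bigr)^{2}.
\]

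I would then fix $s = \sum_i c_i$ and minimize $\sum_i c_i^2$ over integer vectors with this sum. A standard exchange argument (if $c_j \geq c_k + 2$, replacing them by $c_j - 1$ and $c_k + 1$ strictly decreases $\sum_i c_i^2$) forces a minimizer's entries to differ by at most one. In particular, for $|s| \leq p$ the minimum equals $|s|$, attained exactly by vectors with $|s|$ entries of $\mathrm{sign}(s)$ and the remaining entries zero, so the squared distance becomes $|s|(p+1-|s|)/p$. Under $s \equiv z \pmod{p+1}$, the only values with $|s| \leq p$ are $s = z$ and $s = z - (p+1)$; both give $z(p+1-z)/p$ and produce exactly the two families of $\mathbf{c}$ stated in the proposition.

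What remains, and is the main obstacle, is to rule out minimizers at any other $s = z + k(p+1)$ with $k \notin \{0, -1\}$. Substituting the explicit integer minimum $\min \sum_i c_i^2 = p\lfloor s/p\rfloor^2 + (s \bmod p)(2\lfloor s/p\rfloor + 1)$ into the quadratic form, a direct calculation shows that $k = 1$ and $k = -2$ already give squared distance exceeding $z(p+1-z)/p$ by $2(p+1)/p$; the same conclusion for $|k| \geq 2$ then follows from the dominant $\lfloor s/p\rfloor^2$ term, which grows strictly with $|k|$.

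Finally, part (ii) is a direct counting consequence. Fixing an adult $\mathbf{a}\mathbf{M}_p^*$, the baby $(\mathbf{a} + \mathbf{c})\mathbf{M}_p^*$ has $\mathbf{a}\mathbf{M}_p^*$ as a parent exactly when $\mathbf{c}$ is a nonzero vector in one of the two families from part (i). There are $2^p - 1$ nonzero vectors with $c_i \in \{0, 1\}$ and another $2^p - 1$ with $c_i \in \{0, -1\}$, giving $2(2^p - 1) = 2^{p+1} - 2$ children, as claimed.
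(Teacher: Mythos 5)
Your argument is correct and reaches the same conclusion by a genuinely different route. The paper works locally: for a candidate offset $\mathbf{c}$ it exhibits explicit perturbations ($\mathbf{c}\mapsto\mathbf{c}+\mathbf{e}_i-\mathbf{e}_j$ when two entries differ by at least $2$, and $\mathbf{c}\mapsto\mathbf{c}\mp\mathbf{u}_p-(\pm\mathbf{e}_j)$ in the remaining cases) that produce a strictly closer adult point, which yields the necessary condition without ever parametrizing by the coordinate sum. You instead set the problem up globally as an integer quadratic program: you simplify the Gram matrix to $\tfrac{p+1}{p}\mathbf{I}_p-\tfrac1p\mathbf{J}_p$ (the paper leaves it unsimplified), decompose by the layer sum $s=\sum c_i$, solve each layer exactly by the balancing exchange, and then compare layers across $s\equiv z\pmod{p+1}$. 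Your approach buys the explicit minimum distances $z(p+1-z)/p$ and a systematic picture of the layers; the paper's is shorter and avoids the cross-layer comparison, which is the one place your write-up is still a sketch. That step does close: writing $g(s)$ for the minimal squared distance in layer $s$, one gets $g(s)=\{s^2+(p+1)r(p-r)\}/p^2$ with $r=s\bmod p$, whence
\[
g(s+p+1)-g(s)=\frac{2(p+1)\left(\lfloor s/p\rfloor+1\right)}{p}>0\quad\text{for }s\ge 0,
\]
and symmetry $g(-s)=g(s)$ then rules out every $k\notin\{0,-1\}$; note only that your stated excess $2(p+1)/p$ for $k=1$ holds for $z\le p-1$, while at $z=p$ the excess is $4(p+1)/p$ (still positive, so the conclusion is unaffected). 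Part (ii) is the same trivial count in both treatments.
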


\section{Construction} \label{sec:const}

\subsection{Construction of rotated sphere packing designs} \label{sec:const:RSPD}

Before proposing our algorithms to construct sliced rotated sphere packing designs, we first give a brief review of the construction of rotated sphere packing designs proposed in~\citet{RSPD}.  
A rotated sphere packing design is a finite set of points generated from rescaling, rotating, translating and extracting the points from a lattice. 
With $p$, $n$ and the generator matrix $\mathbf{G}$ given, the algorithm has five major steps: 
\begin{enumerate}
\item Obtain a rotation matrix $\mathbf{R}$, which is a $p\times p$ orthogonal matrix. 
\item Obtain a large design given by $\mathbf{E} = \mathbf{F} \mathbf{G} \mathbf{R}$, 
where $\mathbf{F}$ is an integer matrix sufficiently large such that 
$\{ \mathbf{f}^T\mathbf{G}\mathbf{R} : \mathbf{f} \in \mathbb{Z}^p, \mathbf{f}^T\mathbf{G}\mathbf{R} \in [-l/2-\rho_c,l/2+\rho_c]^p \}$ is a subset of rows of $\mathbf{E}$, 
where   
$ l = \left(n \Omega_p/\Theta\right)^{1/p} \rho_c$, 
$\Omega_p$ is the volume of one unit sphere in $\mathbb{R}^p$, $\Theta$ is the thickness of the lattice and $\rho_c$ is the covering radius of the lattice. 
\item Search for a perturbation vector $\boldsymbol{\delta} =(\delta_1,\ldots,\delta_p)^T \in \text{Vor}(0)$ such that 
there are exactly $n$ points of $\mathbf{E}$ contained in the region $\otimes_{k=1}^p [-l/2-\delta_k,l/2-\delta_k]$, 
where $\text{Vor}(0)$ is the Voronoi cell of $(0,\ldots,0) \in \mathbf{E}$. 
A theorem in~\citet{RSPD} guarantees the existence of such $\boldsymbol{\delta}$. 
\item Obtain the design $\mathbf{D}$ by extracting points of $ \mathbf{\tilde E}/l+1/2 $ that lie in $[0,1]^p$,  
where $\mathbf{\tilde E}$ is the matrix obtained by adding $\boldsymbol{\delta}^T$ to rows of $\mathbf{E}$. 
\item Repeat Steps~1-4 for $w$ times and select the $(\mathbf{R},\boldsymbol{\delta})$ combination that maximizes the empirical projected uniformity measured by the criterion~\citep{Roshan:2015} 
\begin{equation}
\label{eqn:criterion:MaxPro}
\psi(\mathbf{D}) = \left\{ \{n(n-1)\}^{-1}
\sum_{1\leq i<j\leq n} \frac{1}{\prod_{k=1}^p (x_{i,k}-x_{j,k})^2} \right\}^{1/p}. 
\end{equation} 
\end{enumerate}

In~\citet{RSPD}, $\mathbf{G}$ is recommended to be $\mathbf{M}_p^*$ in (\ref{eqn:A_p^*}). 
A Givens rotation $\mathbf{R}_p(i,j,\alpha)$ is the $p\times p$ identity matrix with the $(i,i)$th, $(i,j)$th, $(j,i)$th and $(j,j)$th elements being replaced by $\cos(\alpha)$, $-\sin(\alpha)$, $\sin(\alpha)$ and $\cos(\alpha)$, respectively. 
For $p=2$, $\mathbf{R}=\mathbf{I}_2$ and $w=1$ was recommended.
For $p>2$, it was recommended to use $w=100$ and generate $\mathbf{R}$s randomly by multiplying $p(p-1)/2$ sequential Givens rotations with $\alpha$ sampled independently and uniformly from $[0,2\pi]$.

\subsection{Construction of sliced rotated sphere packing designs} \label{sec:const:SRSPD}

We now give two algorithms to construct sliced rotated sphere packing designs based on a sliced lattice $(\mathbf{L},\mathbf{K},\mathbf{Q})$ and an ordinary rotated sphere packing design. 
The algorithms are general for any type of sliced lattice.  
Let $\mathbf{G}$ and $\mathbf{H}$ be the generator matrices of $\mathbf{L}$ and $\mathbf{K}$, respectively, and assume $\mathbf{Q}=\{\mathbf{q}_1,\ldots,\mathbf{q}_s\}$ with $\mathbf{q}_1=0$. 
For the $(A_p^*,A_p,\mathbf{B})$, we can use $\mathbf{G}=\mathbf{M}_p^*$ in (\ref{eqn:A_p^*}), $\mathbf{K}=\{2(p+1)\}^{1/2}\mathbf{M}_p$ in (\ref{eqn:A_p}) and $s=p+1$.  

The first algorithm partitions an $\mathbf{L}$-based rotated sphere packing design with the following three steps: 
\begin{enumerate}
\item Obtain $\mathbf{D}$, an ordinary $\mathbf{G}$-based rotated sphere packing design with $n$ points as in Section~\ref{sec:const:RSPD}, 
and express its points as $\mathbf{x}_i=(\mathbf{a}_i^T \mathbf{G} \mathbf{R} + \boldsymbol{\delta}^T)/l + 1/2$, $\mathbf{a}_i \in \mathbb{Z}^p$, $i=1,\ldots,n$. 
\item Determine the coset $\mathbf{a}_i^T \mathbf{G}$ belongs to, $i=1,\ldots,n$. 
\item Obtain $\mathbf{D}_k = \{\mathbf{x}_i: \mathbf{a}_i^T \mathbf{G} \in \mathbf{K} \oplus \mathbf{q}_k\}$, $k=1,\ldots,s$. 
\end{enumerate}
The sliced rotated sphere packing design is given by $(\mathbf{D}_1,\ldots,\mathbf{D}_s)$. 
This algorithm is suitable for simultaneous construction of sliced rotated sphere packing designs.  

The second algorithm enlarges a $\mathbf{K}$-based rotated sphere packing design with the following five steps: 
\begin{enumerate}
\item Obtain $\mathbf{D}_1$, an ordinary $\mathbf{H}$-based rotated sphere packing design with $n_1$ points as in Section~\ref{sec:const:RSPD},
and express its points as $\mathbf{x}_i=(\mathbf{a}_i^T \mathbf{H} \mathbf{R} + \boldsymbol{\delta}^T)/l + 1/2$, $\mathbf{a}_i \in \mathbb{Z}^p$, $i=1,\ldots,n_1$. 
\item Obtain a large design given by $\mathbf{E} = \mathbf{F} \mathbf{G} \mathbf{R}$, 
where $\mathbf{F}$ is an integer matrix sufficiently large such that 
$\{ \mathbf{f}^T\mathbf{G}\mathbf{R} : \mathbf{f} \in \mathbb{Z}^p, \mathbf{f}^T\mathbf{G}\mathbf{R} \in [-l/2-\rho_c,l/2+\rho_c]^p \}$ is a subset of rows of $\mathbf{E}$.
\item Obtain the design $\mathbf{D}$ by extracting points of $ \mathbf{\tilde E}/l+1/2 $ that lie in $[0,1]^p$,  
where $\mathbf{\tilde E}$ is the matrix obtained by adding $\boldsymbol{\delta}^T$ to rows of $\mathbf{E}$. 
\item Determine the coset $\mathbf{a}_i^T \mathbf{G}$ belongs to, $i=1,\ldots,n$. 
\item Obtain $\mathbf{D}_k = \{\mathbf{x}_i: \mathbf{a}_i^T \mathbf{G} \in \mathbf{K} \oplus \mathbf{q}_k\}$, $k=2,\ldots,s$. 
\end{enumerate}
The sliced rotated sphere packing design is given by $(\mathbf{D}_1,\ldots,\mathbf{D}_s)$. 
This algorithm is appealing for sequential experiments in which the adult points are given by $\mathbf{D}_1$ and the baby points are given by $\cup_{k=2}^s \mathbf{D}_k$. 

From both algorithms, the desired sliceable structure comes with no lose of uniformity and little extra computation. 
The full design achieves the same degree of uniformity as an ordinary $\mathbf{L}$-based rotated sphere packing design while the subdesigns achieve the same degree of uniformity as ordinary $\mathbf{K}$-based rotated sphere packing designs. 
Although the algorithms are applicable to arbitrary sliced lattices, 
the resulted sliced design is space-filling only if its underlying sliced lattice is space-filling. 
As a result, in this paper we focus on $(A_p^*,A_p,\mathbf{B})$-based sliced rotated sphere packing designs. 
We can define the parent-child relation of sliced rotated sphere packing designs similarly to that for sliced lattices. 
We illustrate the parent-child relation of an $(A_2^*,A_2,\mathbf{B})$-based sliced rotated sphere packing design in Figure~\ref{fig:child}. 

\begin{figure}
\begin{center}
\includegraphics[width=13.6cm]{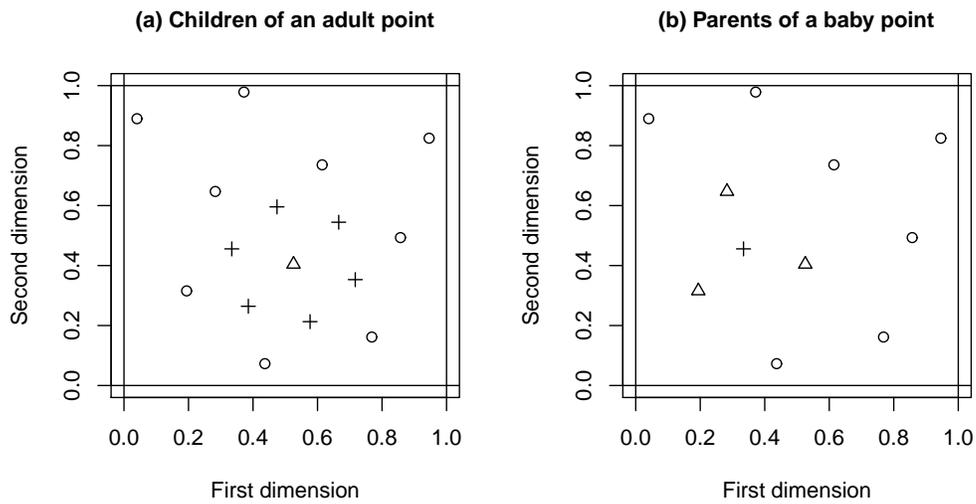}
\end{center}
\caption[width=10cm]{The parent-child relation of an $(A_2^*,A_2,\mathbf{B})$-based sliced rotated sphere packing design. 
The panel on the left depicts the six children (pluses) of an adult point (triangle) and other adult points (circles). 
The panel on the right depicts the three parents (triangles) of a baby point (plus) and other adult points (circles). 
\label{fig:child}}
\end{figure}

Let $n_j$ denote the number of points for $\mathbf{D}_j$ and $n=\sum n_j$,  
the first algorithm allows $n$ to be pre-specified and the second algorithm allows $n_1$ to be pre-specified. 
However, neither algorithm allows us to simultaneously set the values of $(n_1,\ldots,n_s)$. 
This is a major limitation of sliced rotated sphere packing designs. 
In some applications the balance property that $n_1=\cdots=n_s$ may be important. 
The balance property of a sliced design $\mathbf{D}$ can be measured by the criterion
\begin{equation*} \label{eqn:sn}
 \varphi(\mathbf{D}) = \sum_{j=1}^{s} (n_j-n/s)^2.
\end{equation*}
Recall that in the construction algorithm of rotated sphere packing designs, we generate a number of $(\mathbf{R},\boldsymbol{\delta})$ randomly. 
Figure~\ref{fig:dnpsi} plots $\varphi(\mathbf{D})$ and $\psi(\mathbf{D})$ in (\ref{eqn:criterion:MaxPro}) of 100 random designs when $p=4$ and $n=50$. 
It is observed that the $n_1,\ldots,n_s$ are likely to be roughly equal but not exactly the same. 
Ten of these designs achieve minimum $\varphi(\mathbf{D})$, i.e., 2. 
If balance is the primary concern, we can choose the design with minimum $\psi(\mathbf{D})$ among those 10 designs. 
This design is almost balanced and has $n_1=9$, $n_2=11$ and $n_3=n_4=n_5=10$. 
Meanwhile, it also achieves good projection uniformity. 
We can further reduce $\varphi(\mathbf{D})$ by searching for $\boldsymbol{\delta}$ around $(\mathbf{R},\boldsymbol{\delta})$ combinations that yield low $\varphi(\mathbf{D})$, but we omit the details here.
From our experience, it is not hard to find a balanced $(A_p^*,A_p,\mathbf{B})$-based sliced rotated sphere packing design for $p\leq 5$ with $n\leq 200$.  
However, as $p$ and $n$ grow, it becomes much harder to find strictly balanced designs. 

\begin{figure}
\begin{center}
\includegraphics[width=10cm]{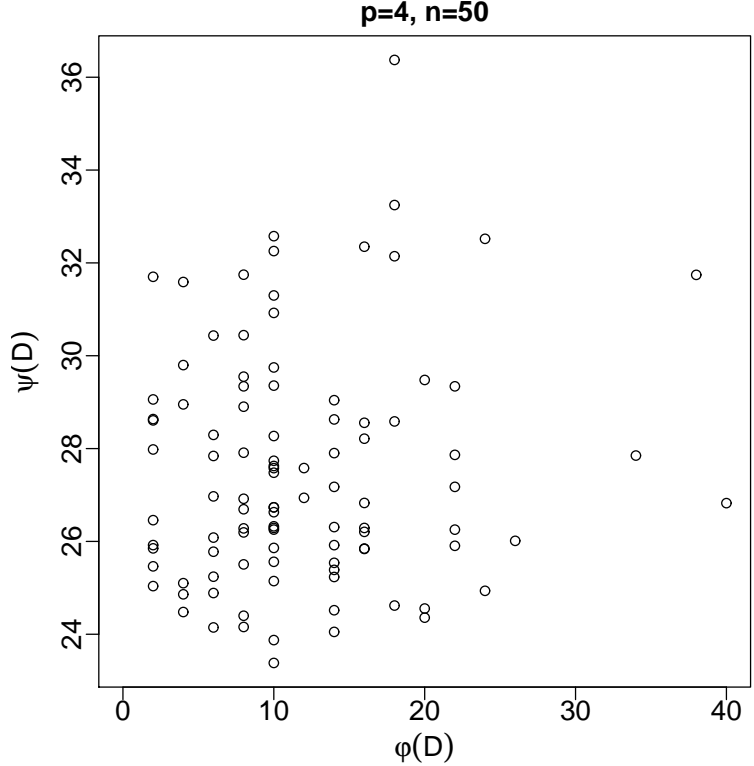}
\end{center}
\caption[width=10cm]{The $\varphi(\mathbf{D})$ and $\psi(\mathbf{D})$ from 100 randomly generated designs, $p=4$, $n=50$. 
\label{fig:dnpsi}}
\end{figure}

\section{Numerical comparison on separation distance}\label{sec:simu}

In this section, we compare $(A_p^*,A_p,\mathbf{B})$-based sliced rotated sphere packing designs with optimal sliced Latin hypercube designs and sliced Latin hypercube designs using the separation distance criterion by (\ref{eqn:dp}). 
As discussed, this criterion reflects the uniformity of designs and is what optimal sliced Latin hypercube designs aim to maximize. 
For $(A_p^*,A_p,\mathbf{B})$-based sliced rotated sphere packing designs generated from the first algorithm,  
the separation distance for the full design is $p^{1/2} (p+1)^{(1-p)/(2p)} n^{-1/p}$, 
the same to that of an ordinary $A_p^*$-based rotated sphere packing design, 
and the separation distance among points in the same slice is $\sqrt{2} (p+1)^{1/(2p)} n^{-1/p}$,   
the same to that of an ordinary $A_p$-based rotated sphere packing design with $n/(p+1)$ points.  

We obtain separation distance of the other two types of sliced designs numerically. 
The comparison for $2\leq p\leq 10$, $n=10(p+1)$ and $n=40(p+1)$ are shown in Figure~\ref{fig:maximin}. 
Seen from the results, 
for $n=10(p+1)$, sliced rotated sphere packing design is the best for $2\leq p\leq 6$; 
for $n=40(p+1)$, sliced rotated sphere packing design is the best for $2\leq p\leq 7$. 
These results were expected by us, since ordinary rotated sphere packing designs have better separation distance than maximin distance Latin hypercube designs for $2\leq p\leq 6$~\citep{RSPD}, 
sliced rotated sphere packing designs retain the same separation distances as ordinary rotated sphere packing designs, and optimal sliced Latin hypercube designs have inferior separation distances to maximin distance Latin hypercube designs. 
From our experience, sliced rotated sphere packing design is more competitive as $n$ grows. 
Besides, the construction of sliced rotated sphere packing designs is fast for $2\leq p\leq 6$. 
For instance, it takes 106 seconds to generate a sliced rotated sphere packing design with $p=6$ and $n=600$ on a laptop. 
To sum it, sliced rotated sphere packing designs have good distance-based properties and should be useful for applications such as 
computer experiments with quantitative and qualitative variables~\citep{Qian:Wu:Wu:2008,Deng:2016}, 
computer experiments with multiple levels of accuracy~\citep{Qian:Wu:2008}, 
and model validation~\citep{Zhang:2013}. 

\begin{figure}
\begin{center}
\includegraphics[width=13.6cm]{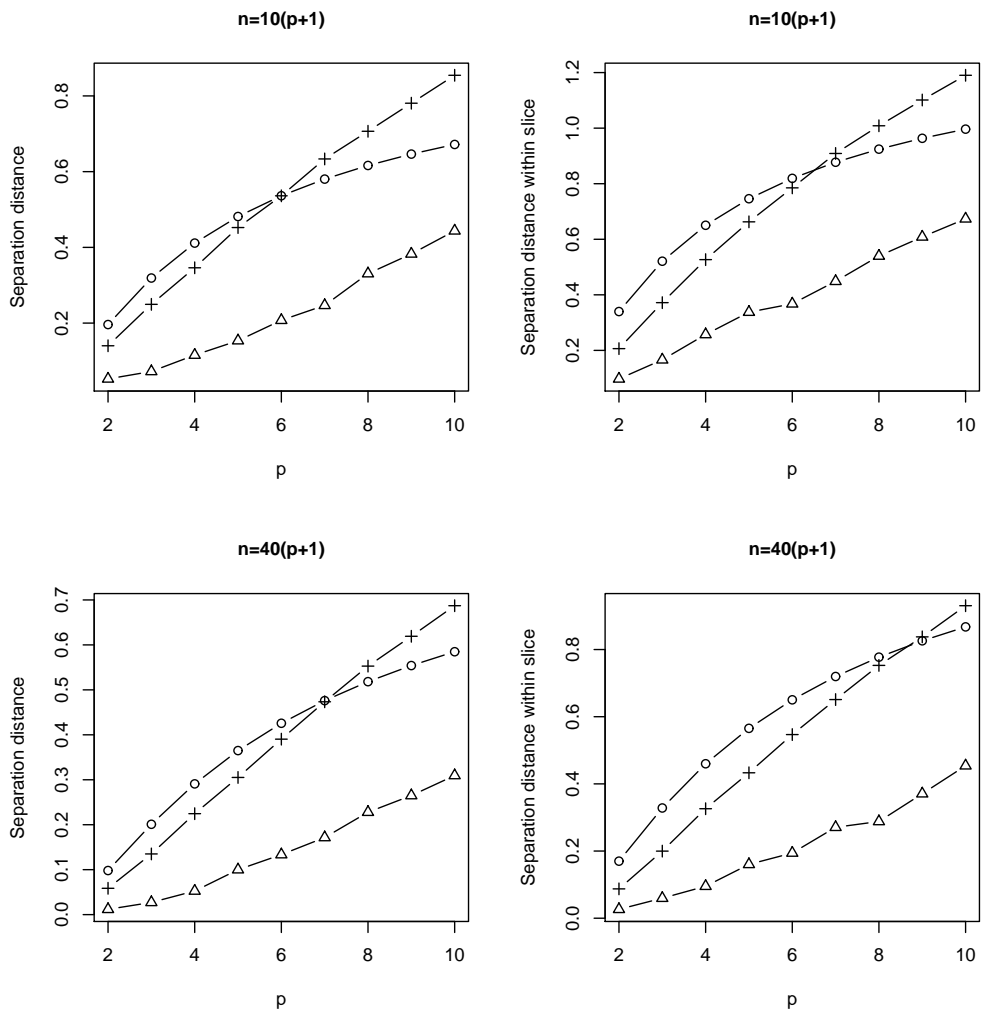}
\end{center}
\caption{Separation distance of all points (left) and points in the same slice (right) for three types of sliced designs: sliced rotated sphere packing designs (circles), optimal sliced Latin hypercube designs (pluses) and sliced Latin hypercube designs (triangles).   
\label{fig:maximin}}
\end{figure}

\section{Adaptive sliced rotated sphere packing designs} \label{sec:adaptive}


In this section, we propose a strategy to use sliced rotated sphere packing designs adaptively. 
Adaptive designs are widely used in many computer experiment problems including 
emulation of nonstationary computer experiments~\citep{Jin:2002}, 
global optimization of black-box functions~\citep{EI}, 
finding several promising points for response surface optimum~\citep{Joseph:energy}, 
finding an excursion set whose output is above a target value~\citep{Chevalier:2014} and   
estimating a percentile of the output distribution~\citep{Oakley:2004}.
Here we focus on the emulation and optimization objectives. 

\begin{figure}
\begin{center}
\includegraphics[width=10cm]{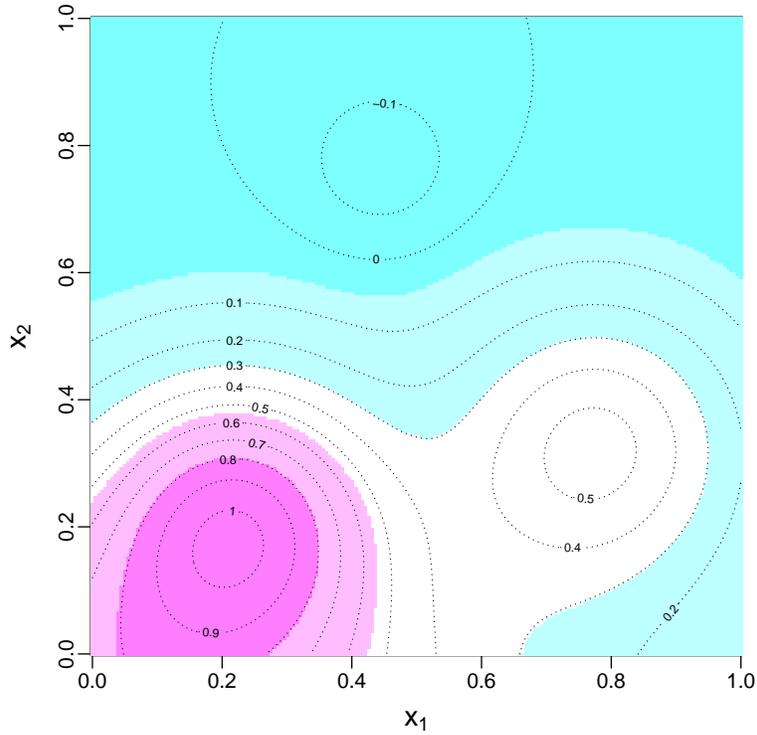}
\end{center}
\caption[width=10cm]{Counter plot of the Franke's function. 
\label{fig:resShow}}
\end{figure}

Many computer experiments are nonstationary. 
For example, Figure~\ref{fig:resShow} gives the contour plot of the Franke's function~\citep{Joseph:energy}. 
It can be seen that the output has larger volatility in the bottom-left corner than in other places. 
Thus, in order to obtain an overall accurate emulator, a space-filling design with denser points in the bottom-left corner is desired. 
The major challenge here is to identify the high volatility region based on limited computer runs. 
Although Gaussian process emulators can give variance estimates for any position in the design space, 
a stationary Gaussian process model will not yield high variance estimate for high volatility regions. 
One notable nonstationary Gaissuan process model is the treed Gaussian process model which partitions the design space based on volatility and fit different Gaussian process models separately~\citep{Gramacy:2008}. 
An adaptive design approach using treed Gaussian process model was proposed in~\citet{Gramacy:2009}. 
However, from our experience, this approach does not work well for small sample sizes. 
Another cross-validation based approach was proposed in~\citet{Jin:2002} which has three major steps below:  
\begin{enumerate}
\item Carry out initial runs that come from a maximin distance Latin hypercube design. 
\item Fit Gaussian process emulators using completed runs and add follow-up runs one-by-one. 
Let the cross-validation error be defined by 
\begin{equation}~\label{eqn:e}
 e(\mathbf{x}_{n+1}) = \left[ \left\{ \sum_{i=1}^n \left( \hat f_{-i}(\mathbf{x}_{n+1}) - \hat f(\mathbf{x}_{n+1}) \right)^2 \right\}/n \right]^{1/2}, 
\end{equation}
where $n$ is the number of completed runs, $\hat f(\mathbf{x}_{n+1})$ is the predicted outcome from emulating all completed runs and $\hat f_{-i}(\mathbf{x}_{n+1})$ is the predicted outcome without using the $i$th run. 
Let 
\begin{equation}~\label{eqn:g}
 g(\mathbf{x}_{n+1}) = e(\mathbf{x}_{n+1}) \text{min}_{i=1}^n \|\mathbf{x}_{n+1}-\mathbf{x}_i\|. 
\end{equation}
The new point $\mathbf{x}_{n+1}$ shall maximize $g(\mathbf{x}_{n+1})$ where $\mathbf{x}_{n+1}\in [0,1]^p$. 
\item Stop when a certain number of points are added or $\sup_{\mathbf{x}} g(\mathbf{x})$ goes below a value.  
\end{enumerate}

In this algorithm, high $e(\mathbf{x})$ implies high volatility around $\mathbf{x}$ and high $\text{min}_{i=1}^n \|\mathbf{x}-\mathbf{x}_i\|$ implies good interpoint distance. 
As a result, the added runs tend to locate in high volatility regions and not too close to any completed run. 

Finding the response surface minimum is another important objective for computer experiments. 
A related objective is to find several promising points for response surface minimum. 
The promising points can be further investigated by extra experiments based on same or different responses.
For this objective, sequential minimum energy designs~\citep{Joseph:energy} are suitable which has three major steps below:  
\begin{enumerate}
\item Carry out initial runs that come from a maximin distance Latin hypercube design. 
\item Fit a Gaussian process emulator using completed runs and add follow-up runs one-by-one. 
Let the density function be defined by 
\begin{equation}~\label{eqn:d}
 d(\mathbf{x}) = \hat f_{\text{max}} - \hat f(\mathbf{x}), 
\end{equation}
where $\hat f_{\text{max}}$ is the estimated global maximum output value and $\hat f(\mathbf{x})$ is the predicted outcome value at $\mathbf{x}$. 
Let the energy function be 
\begin{equation} \label{eqn:energy} 
 r(\mathbf{x}) = \sum_{i=1}^n \left\{ d(\mathbf{x}_i)^{-2}d(\mathbf{x})^{-2} \|\mathbf{x}_i-\mathbf{x}\|^{-4p} \right\}. 
\end{equation}
The new point $\mathbf{x}_{n+1}$ shall minimize $r(\mathbf{x}_{n+1})$ where $\mathbf{x}_{n+1}\in [0,1]^p$. 
\item Stop when a certain number of points are added or $\inf_{\mathbf{x}} r(\mathbf{x})$ goes above a value.  
\end{enumerate}
In this algorithm, high $d(\mathbf{x})$ implies relatively low output values and high $\|\mathbf{x}_i-\mathbf{x}\|$ implies good interpoint distance. 
As a result, the added runs tend to locate in low outcome regions and not too close to any completed run. 

It can be seen that the two algorithms are very similar to each other. 
The primary difference between them, as well as many other adaptive design methods, lies in the criterion  to choose follow-up runs (e.g., $g(\mathbf{x})$ in (\ref{eqn:g}) and $r(\mathbf{x})$ in (\ref{eqn:energy})). 
The criterion is the key to the success of adaptive designs. 
It needs to generate denser points in more interesting regions while scattering points uniformly in local regions. 
Most adaptive design methods are greedy in assuming that the next run to be added is the last run. 
As a result, if many follow-up runs are added in a local area, these points have no space-filling property. 

Sliced rotated sphere packing designs provide a non-greedy approach for adaptive designs. 
For the objective of emulating nonstationary computer experiments, our first strategy 
has the following three steps: 
\begin{enumerate}
\item Generate $(D_1,\ldots,D_s)$, an $(A_p^*,A_p,\mathbf{B})$-based sliced rotated sphere packing design using the second algorithm in Section~\ref{sec:const:SRSPD}. 
Run experiments using $D_1$ and obtain the outputs. 
\item Fit Gaussian process emulators using completed runs and add follow-up runs one-by-one. 
Treat points in $\cup_{j=2}^s D_j$ as candidates for follow-up runs. 
The new point $\mathbf{x}_{n+1}$ shall maximize $g(\mathbf{x}_{n+1})$ in (\ref{eqn:g}) where $\mathbf{x}_{n+1}\in \cup_{j=2}^s D_j$. 
\item Stop when a certain number of points are added or the $\sup_{\mathbf{x}} g(\mathbf{x})$ goes below a value.  
\end{enumerate}

Instead of searching for the entire $[0,1]^p$ to find an $\mathbf{x}$ that maximizes $g(\mathbf{x})$, we propose to search over a short list of candidate points. 
Apart from the apparent advantage of reduced computation, the new strategy ensures space-filling properties when multiple follow-up runs are added in a local area. 
For a local area with $m$ initial runs, up to roughly $pm$ follow-up runs can be added while preserving the $p^{1/2} (p+1)^{(-1-p)/(2p)} {n_1}^{-1/p}$ separation distance among all initial and follow-up runs, 
where $n_1$ is the number of initial runs. 

In the above strategy, we use the same criterion, namely $g(\mathbf{x})$, and the same stopping rules. 
Because baby points have the same interpoint distance, using $g(\mathbf{x})$ is equivalent to using $e(\mathbf{x})$ in (\ref{eqn:e}) alone. 
We now propose a simpler criterion. 
For $i=1,\ldots,n_1$, let 
\begin{equation}\label{eqn:te}
 \tilde e(\mathbf{x}) = \left[ \left\{ \sum_{i=1}^{n_1} \left( \hat f_{-i}(\mathbf{x}) - \hat f(\mathbf{x}) \right)^2 \right\}/n_1 \right]^{1/2},
\end{equation}
where $\hat f(\mathbf{x})$ is the predicted outcome from emulating all initial runs and $\hat f_{-i}(\mathbf{x})$ is the predicted outcome without using the $i$th run. 
For any baby point, let $\tilde e(\mathbf{x})$ be defined as the average $\tilde e$ value of its parents. 
In some rare cases, none of the parents of a baby point is located in $[0,1]^p$. 
Such baby points are assigned with highest $\tilde e(\mathbf{x})$. 
The $\tilde e(\mathbf{x})$ criterion is a further simplification from $e(\mathbf{x})$; using this criterion, we do not need to refit Gaussian process models after new runs completed. 

Below we summarize design strategies introduced for the emulation objective: 
\begin{description}
\item[MmLH] Use a non-adaptive maximin distance Latin hypercube design. 
\item[MmLH-CV] Use a maximin distance Latin hypercube design for $n_1$ initial runs; 
use the cross-validation based criterion $g(\mathbf{x})$ in (\ref{eqn:g}) to add follow-up runs. 
\item[SRSPD-CV] Use sliced rotated sphere packing design with $n_1$ adult points; 
use the cross-validation based criterion $g(\mathbf{x})$ in (\ref{eqn:g}) to add follow-up runs. 
\item[SRSPD-CV2] Use sliced rotated sphere packing design with $n_1$ adult points; 
use a modified error function $\tilde e(\mathbf{x})$ in (\ref{eqn:te}) to add follow-up runs. 
\end{description}
We compare these methods numerically on average prediction error from Gaussian process emulation over 10000 independently and uniformly sampled testing locations, assuming $n_1=13$ and altogether $n\geq 13$ runs are used. 
For each method, the results are averaged based on 100 randomly generated designs. 
To add randomness into SRSPD-CV and SRSPD-CV2, we use $w=100$ and randomly generated $R$s for sliced rotated sphere packing designs, which is different from our general recommendation for $p=2$. 
The results are shown in Figure~\ref{fig:resPEns2}. 
%


\begin{figure}
\begin{center}
\includegraphics[width=10cm]{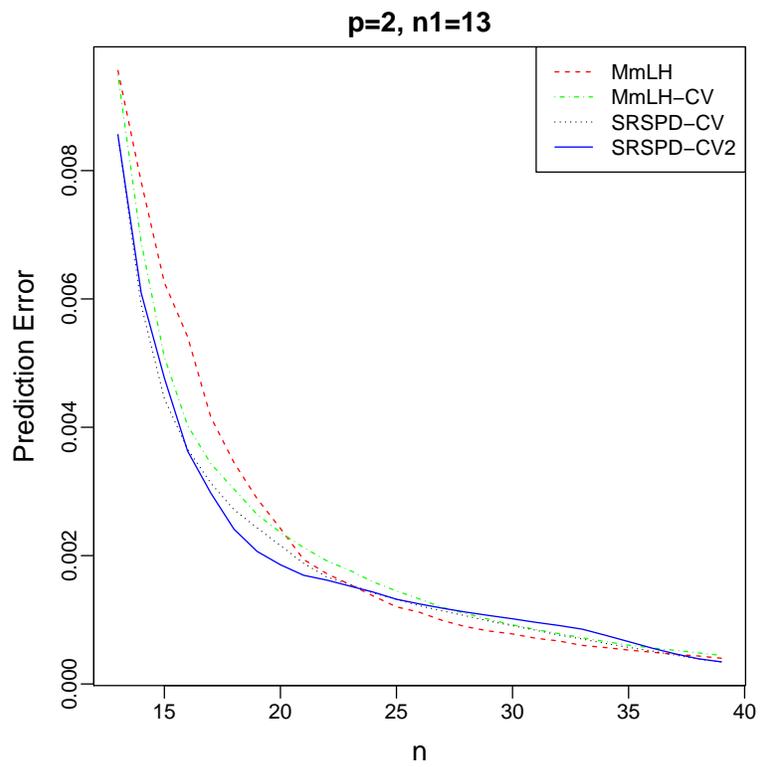}
\end{center}
\caption[width=10cm]{Mean prediction error for emulating the Franke's function. 
\label{fig:resPEns2}}
\end{figure}

Seen from the results, adaptive methods perform better than MmLH for $n\leq 20$. 
Clearly, adding no more than seven points in the bottom-left corner is better than scattering points uniformly in the design space. 
However, it is not beneficial to add more than seven follow-up runs since the bottom-left corner cannot contain too many points. 

The SRSPD-CV performs uniformly better than MmLH-CV. 
There are three possibly reasons for this. 
Firstly, follow-up runs of MmLH-CV may not be located in space-filling locations because the greedy one-at-a-time strategy cannot simultaneously control locations of multiply follow-up runs. 
Secondly, for MmLH-CV the balance between volatility and interpoint distance may not be ideal.
This may result in too much focus on either volatility or interpoint distance. 
Indeed, it is unknown if $g(\mathbf{x})$ in (\ref{eqn:g}) is inferior to $e(\mathbf{x}) \text{min}_{i=1}^n \|\mathbf{x}-\mathbf{x}_i\|^2$ or $e(\mathbf{x}) \text{min}_{i=1}^n \|\mathbf{x}-\mathbf{x}_i\|^{1/2}$. 
In contrast, for SRSPD-CV separation distance properties are ensured by the sliced lattice structure and $g(\mathbf{x})$ is used solely to measure volatility. 
Lastly, because it is computationally infeasible to compute $g(\mathbf{x})$ for every $\mathbf{x} \in [0,1]^p$, for MmLH-CV we only compute $g(\mathbf{x})$ on 5000 randomly generated positions as recommended by~\citet{Jin:2002}. 
Thus, the added runs may be suboptimal in $g(\mathbf{x})$. 

The SRSPD-CV2 performs better than SRSPD-CV for $17\leq n\leq 22$. 
This might because of the deficiency of $e(\mathbf{x})$. 
Clearly, $e(\mathbf{x})$ decreases as more points are added near $\mathbf{x}$. 
This may hinder adding more points in the left-bottom corner. 
In contrast, adult points from a sliced rotated sphere packing design have the same interpoint distance to other points, making $\tilde e(\mathbf{x})$ a fair measure on volatility. 
To sum it, when using sliced rotated sphere packing designs, much simpler criterion can be used; 
the parent-child relation may help in defining the criterion. 
Besides having better performance than MmLH and MmLH-CV, SRSPD-CV and SRSPD-CV2 take much less time.
The SRSPD-CV2 also allows follow-up runs to be added in parallel. 
%

We now return to the global minimization problem. 
The most popular method to the minimization problem is the EI algorithm below~\citep{EI}: 
\begin{enumerate}
\item Carry out $n_2$ initial runs that is uniformly distributed in the design space. 
\item Fit a Gaussian process emulator using completed runs and add follow-up runs one-by-one. 
Let the expected improvement of a new point $\mathbf{x}_{n+1}$ be 
\begin{equation} \label{eqn:EI}
\text{EI}(\mathbf{x}_{n+1}) = \text{E} \left( \left\{ \min_{i=1,\ldots,n} f(\mathbf{x}_i) - f(\mathbf{x}_{n+1})  \right\}^+ \mid \mathbf{x}_1,\ldots,\mathbf{x}_n,f(\mathbf{x}_1),\ldots,f(\mathbf{x}_n) \right), 
\end{equation}
where $\mathbf{x}_1,\ldots,\mathbf{x}_n$ are the evaluated runs, $z^+=z$ if $z\geq 0$ and $z^+=0$ if $z<0$. 
In the formula, $f(\mathbf{x}_{n+1})$ is random because the new run has not been carried out yet. 
\citet{EI} gave a deterministic formula to compute $\text{EI}(\mathbf{x}_{n+1})$ for any given $\mathbf{x}_{n+1}$. 
The new point $\mathbf{x}_{n+1}$ shall maximize $\text{EI}(\mathbf{x}_{n+1})$ where $\mathbf{x}_{n+1}\in [0,1]^p$. 
\item Stop when a certain number of points are added or $\sup_{\mathbf{x}} \text{EI}(\mathbf{x})$ goes below a value.  
\end{enumerate}

The minimization problem is different from the emulation problem. 
In order to ensure optimality, many points need to cluster around the minimum; 
these points cannot be space-filling. 
The EI criterion does exactly this. 
It was reported in~\citet{Joseph:energy} that sequential minimum energy designs do not work well for finding the single minimum. 
This is presumably because the energy function in (\ref{eqn:energy}) tends to spreads points away from each other. 
However, because sequential minimum energy designs give promising points for the response surface minimum, we develop an algorithm that combines the energy function with the EI criterion: 
 
\begin{enumerate}
\item Carry out $n_1$ initial runs that come from a maximin distance Latin hypercube design. 
\item Fit a Gaussian process emulator using completed runs and add $n_2-n_1$ follow-up runs one-by-one. 
The new point $\mathbf{x}_{n+1}$ shall minimize the energy function $r(\mathbf{x}_{n+1})$ in (\ref{eqn:energy}) where $\mathbf{x}_{n+1}\in [0,1]^p$. 
\item Fit a Gaussian process emulator using completed runs and add more follow-up runs one-by-one. 
The new point $\mathbf{x}_{n+1}$ shall maximize $\text{EI}(\mathbf{x}_{n+1})$ in (\ref{eqn:EI}) where $\mathbf{x}_{n+1}\in [0,1]^p$. 
\item Stop when a certain number of points are added or $\sup_{\mathbf{x}} \text{EI}(\mathbf{x})$ goes below a value.  
\end{enumerate}

The four-step algorithm above replaces the first step of the EI algorithm by sequential minimum energy designs with the same number of total runs. 
This adds adaptiveness to the $n_2$ initial runs.  
From our experience, the new algorithm generally outperforms the original EI algorithm. 
We further modify the algorithm using sliced rotated sphere packing designs. 
Because baby points of sliced rotated sphere packing designs have the same interpoint distance, 
it suffices to use the density function $d(\mathbf{x})$ in (\ref{eqn:d}) to replace the energy function $r(\mathbf{x}_{n+1})$. 
Also because baby points are located in the center of their parents, 
it suffices to use the average output value of parents as the predicted outcome of baby points. 
Clearly, this criterion is much simpler and model-free. 
Our proposed algorithm has the following three steps:

\begin{enumerate}
\item Generate $(D_1,\ldots,D_s)$, an $(A_p^*,A_p,\mathbf{B})$-based sliced rotated sphere packing design with $n_1$ adult points using the second algorithm in Section~\ref{sec:const:SRSPD}. 
Run experiments using $D_1$ and obtain the outputs. 
\item Fit a Gaussian process emulator using completed runs and add $n_2-n_1$ follow-up runs one-by-one. 
The new point $\mathbf{x}_{n+1}$ shall minimize the average output value among parents of $\mathbf{x}_{n+1}$, where $\mathbf{x}_{n+1} \in \cup_{j=2}^s D_j$. 
However, baby points with no parent are carried out with highest priority. 
\item Fit a Gaussian process emulator using completed runs and add more follow-up runs one-by-one. 
The new point $\mathbf{x}_{n+1}$ shall maximize $\text{EI}(\mathbf{x}_{n+1})$ in (\ref{eqn:EI}) where $\mathbf{x}_{n+1}\in [0,1]^p$. 
\item Stop when a certain number of points are added or $\sup_{\mathbf{x}} \text{EI}(\mathbf{x})$ goes below a value.  
\end{enumerate}

We compare the above-mentioned methods numerically: 
\begin{description}
\item[MmLH] The original EI algorithm using a maximin distance Latin hypercube design in the first step. 
\item[SMED] The four-step algorithm using the energy function. 
\item[SRSPD] The four-step algorithm using the average-parent-output criterion. 
\end{description}

As recommend in~\citet{EI}, we use $n_2=10p$ for all methods. 
Remark that for all three methods, the same EI criterion is used for the $(n_2+1)$th and subsequent runs. 
The difference lies in how the first $n_2$ runs are generated. 
For SMED and SRSPD, we set $n_1=13$ for $p=2$ and $n_1=5p$ for $p>2$. 
For a fair comparison, the stopping rule is set on the number of runs.  
We consider the four test functions that were used in~\citet{EI}, namely the Branin function, the Goldstein-Price function, the Hartmann 3 function and the Hartmann 6 function~\citep{test}. 
Their dimensions are 2, 2, 3 and 6, respectively. 
For each function with each strategy, we repeat the procedure for 100 times and compute the response value, namely the minimum output value among completed runs. 
To add randomness into the SRSPD strategy, we use $w=100$ and randomly generated $R$s for sliced rotated sphere packing designs. 
We depict the response value as a function of the number of completed runs in Figure~\ref{fig:EI}. 

\begin{figure}
\begin{center}
\includegraphics[width=13.6cm]{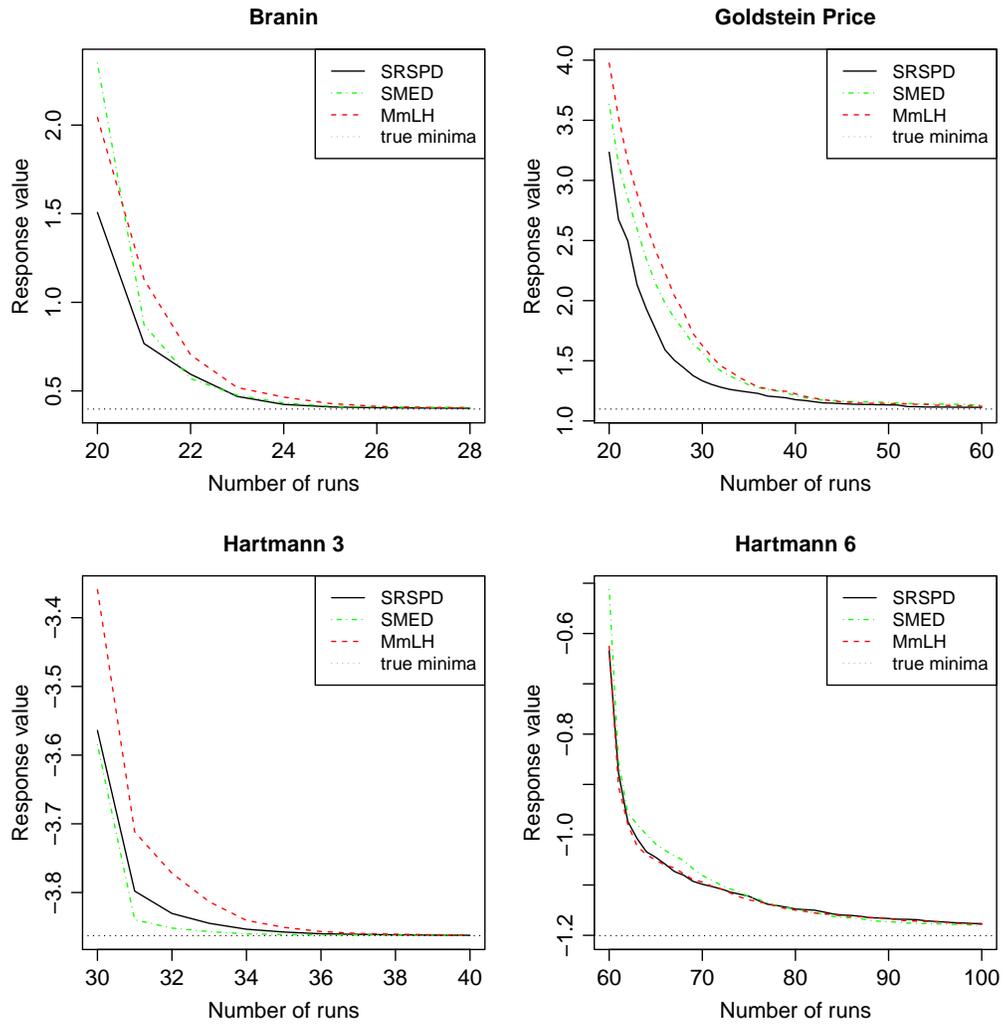}
\end{center}
\caption{Average response value as a function of the number of completed runs for algorithms using three types of initial designs. 
\label{fig:EI}}
\end{figure}

Seen from the results, both SRSPD and SMED perform well in finding promising points using the first $10p$ runs. 
In most cases, they continue to find good input sites earlier than MmLH. 
In particular, SRSPD is the best method for the Branin and Goldstein Price functions and one of the best methods for the Hartmann 6 function. 
Although not as good as SMED for Hartmann 3, SRSPD has the best overall performance. 
This clearly suggests that adaptive sliced rotated sphere packing designs are useful for the minimization problem.  
Similar to the emulation problem, 
the benefit may come from the distance properties of follow-up runs, the robustness of the simple average-parent-output criterion and the fact that we can obtain the exact optimum of the criterion. 
Besides, SRSPD takes less time and allows parallel computation in the second step. 

Although the most important component of adaptive designs is their criteria for choosing follow-up runs, 
our main focus here is not to provide new adaptive designs with new powerful criteria. 
Instead, our goal is to show the advantage of using a short list of candidate points for follow-up runs and that sliced rotated sphere packing designs are suitable under this strategy. 

We have shown that adaptive sliced rotated sphere packing designs can be used in combination with exact or simplified criterion that has been proposed before. 
For complex problems that no adaptive design criterion has been proposed, 
it should be easier to invent a criterion for our strategy than for usual adaptive designs. 
As discussed, a criterion for sliced rotated sphere packing designs only needs to measure how interesting positions are. 
In contrast, a criterion for usual adaptive designs need to strike a proper balance between more points in interesting regions and better distance properties. 
Furthermore, the lattice structure and the parent-child relation may help in developing fair criteria. 
The down side is that our strategy only allows points coming from two densities; it does not allow very dense points in a small region.

\section{Conclusions and discussion} \label{sec:conclusion}

In this paper, we propose a new class of sliced space-filling design called sliced rotated sphere packing designs. 
We also propose a space-filling type of sliced lattice, based on which sliced rotated sphere packing designs achieve good distance properties. 
Because of their delicate local structure, sliced rotated sphere packing designs are suitable as adaptive designs. 

The construction algorithms proposed in Section~\ref{sec:const:SRSPD} apply to any types of sliced lattices. 
Sometimes we should consider sliced lattices other than $(A_p^*,A_p,\mathbf{B})$. 
For example, to design computer experiments with one qualitative variable of $\tilde s$ levels and several quantitative variables, sliced lattices with exactly $\tilde s$ slices are desired. 
A future research problem is to construct sliced rotated sphere packing designs with flexible number of slices. 

We also propose a strategy to use sliced rotated sphere packing designs adaptively. 
The strategy requires a criterion for choosing follow-up runs that are suitable to the specific scientific goal. 
The criteria we have proposed for emulation and optimization problems may not be optimal. 
Further improvement by using more complex criteria are possible.
Our main focus is to corroborate the usefulness of the new strategy. 
Separate studies are needed to find the best algorithm for various applications 
such as 
finding an excursion set whose output is above a target value~\citep{Chevalier:2014} and   
estimating a percentile of the output distribution~\citep{Oakley:2004}.
We plan to work on these problems in the future. 

Similar to ordinary rotated sphere packing designs, a major restriction of sliced rotated sphere packing designs is on the number of dimensions. 
Although sliced rotated sphere packing designs are useful for $2\leq p\leq 6$, they are not suitable for high-dimensional problems.

\section*{Appendix}

\begin{proof}[Proof of Proposition~\ref{prp:slice}]
From (\ref{eqn:A_p^*}) and (\ref{eqn:A_p}), we have 
\[ \left\{2(p+1)\right\}^{1/2}\mathbf{M}_p = (\mathbf{I}_p+\mathbf{J}_p) \mathbf{M}_p^*; \quad \mathbf{M}_p^* = \left\{\mathbf{I}_p-\mathbf{J}_p/(p+1)\right\} \{2(p+1)\}^{1/2}\mathbf{M}_p. \]
Therefore, $(\mathbf{a}-\mathbf{u}_z)^T \mathbf{M}_p^* = (\mathbf{a}-\mathbf{u}_z)^T  \left\{\mathbf{I}_p-\mathbf{J}_p/(p+1)\right\} \{2(p+1)\}^{1/2}\mathbf{M}_p$. 
Because $(\mathbf{a}-\mathbf{u}_z)^T  \left\{\mathbf{I}_p-\mathbf{J}_p/(p+1)\right\}$ is an integer vector if and only if $\sum a_i \mod (p+1) = z$, 
$\mathbf{a}^T \mathbf{M}_p^* \in \mathbf{K} \oplus \mathbf{u}_z$ and $\cup_{k=0}^p ( \mathbf{K} \oplus \mathbf{u}_k ) =\mathbf{L}$. 
Since $\mathbf{u}_y$ and $\mathbf{u}_z$ do not belong to the same coset for $y,z\in\{0,\ldots,p\}$ and $y\neq z$, $\{  \mathbf{K} \oplus \mathbf{u}_0, \ldots,  \mathbf{K} \oplus \mathbf{u}_p \}$ is a partition of $\mathbf{L}$. 
\end{proof}

\begin{proof}[Proof of Proposition~\ref{prp:parent}]
(i) Consider an arbitrary adult point $\mathbf{a}^T \mathbf{M}_p^* \in  \mathbf{K}$. 
Let $\mathbf{a}=(a_1,\ldots,a_p)^T$, $\mathbf{b}=(b_1,\ldots,b_p)^T$ and $\mathbf{c}=(c_1,\ldots,c_p)^T=\mathbf{b}-\mathbf{a}$. 
Consider three cases for $\mathbf{c}$. 
Firstly, assume there exist $i,j \in \{1,\ldots,p\}$ such that $c_j-c_i\geq 2$. 
Let $ \mathbf{\tilde c} = \mathbf{c} +\mathbf{e}_i -\mathbf{e}_j$ where $\mathbf{e}_z$ is the $p$-vector with the $z$th element being one and other elements being zero. 
Then 
\[ \| \mathbf{c}^T \mathbf{M}_p^* \|^2 = \mathbf{c}^T \mathbf{M}_p^* (\mathbf{M}_p^*)^T \mathbf{c}
 = \mathbf{\tilde c}^T \mathbf{M}_p^* (\mathbf{M}_p^*)^T \mathbf{\tilde c} + 2 (\mathbf{c}-\mathbf{\tilde c})^T \mathbf{M}_p^* (\mathbf{M}_p^*)^T \mathbf{\tilde c} + (\mathbf{c}-\mathbf{\tilde c})^T \mathbf{M}_p^* (\mathbf{M}_p^*)^T (\mathbf{c}-\mathbf{\tilde c}) \]
\[ > \| \mathbf{\tilde c}^T \mathbf{M}_p^* \|^2 + 2 (\mathbf{c}-\mathbf{\tilde c})^T \left\{ \frac{p+1}{p} \mathbf{I}_p + \frac{-p-2+2\sqrt{p+1}}{p(\sqrt{p+1}-1)^2} \mathbf{J}_p \right\} \mathbf{\tilde c} \]
\[ = \| \mathbf{\tilde c}^T \mathbf{M}_p^* \|^2 + 2(p+1) (\mathbf{e}_j - \mathbf{e}_i)^T \mathbf{\tilde c} /p 
 \geq \| \mathbf{\tilde c}^T \mathbf{M}_p^* \|^2. \]
Because $(\mathbf{b}- \mathbf{\tilde c})^T \mathbf{M}_p^*$ is an adult point closer to $\mathbf{b}^T \mathbf{M}_p^*$ than $\mathbf{a}^T \mathbf{M}_p^*$, $\mathbf{a}^T \mathbf{M}_p^*$ is not a parent of $\mathbf{b}^T \mathbf{M}_p^*$. 

Secondly, assume $\min c_i \geq 1$ and there is a $j$ such that $c_j= 2$. 
Let $\mathbf{\tilde c} = \mathbf{c} -\mathbf{u}_p -\mathbf{e}_j$. 
Then 
\[ \| \mathbf{c}^T \mathbf{M}_p^* \|^2 > \| \mathbf{\tilde c}^T \mathbf{M}_p^* \|^2 + 2(p+1) \mathbf{e}_j^T \mathbf{\tilde c} /p \geq \| \mathbf{\tilde c}^T \mathbf{M}_p^* \|^2. \]
Because $(\mathbf{b}- \mathbf{\tilde c})^T \mathbf{M}_p^*$ is an adult point closer to $\mathbf{b}^T \mathbf{M}_p^*$ than $\mathbf{a}^T \mathbf{M}_p^*$, $\mathbf{a}^T \mathbf{M}_p^*$ is not a parent of $\mathbf{b}^T \mathbf{M}_p^*$. 

Similarly, the $\mathbf{a}^T \mathbf{M}_p^*$ with $\min c_i \leq -1$ and a $j$ such that $c_j= -2$ is not a parent of $\mathbf{b}^T \mathbf{M}_p^*$, either.  
Combining the three cases, 
a necessary condition for $\mathbf{a}^T \mathbf{M}_p^*$ being a parent of $\mathbf{b}^T \mathbf{M}_p^*$ is either $( c_i\in\{0,1\}, i=1,\ldots p, \sum c_i = z )$ or $( c_i \in \{0,-1\}, i=1,\ldots p, \sum c_i = z-(p+1) )$.
Because all $\mathbf{a}^T \mathbf{M}_p^*$ that satisfy the above conditions have the same distance to $\mathbf{b}^T \mathbf{M}_p^*$, we conclude that all of them are parents of $\mathbf{b}^T \mathbf{M}_p^*$. 

It is not hard to derive (ii) from (i). 
\end{proof}

\bibliographystyle{Chicago}
\bibliography{SlicedDesigns}

\end{document}